\DeclareMathAlphabet\mathbb{U}{msb}{m}{n}
\DeclareSymbolFont{cyrletters}{OT2}{wncyr}{m}{n}
\def\F2{{\mathbb{F}_2}}
\numberwithin{equation}{section}
\newtheorem{Theorem}{Theorem}
\newtheorem{Lemma}[Theorem]{Lemma}
\theoremstyle{definition}
\begin{document}

\title{Subquandles of free quandles}
\author{Sergei O. Ivanov}

\address{
Laboratory of Modern Algebra and Applications,  St. Petersburg State University, 14th Line, 29b,
Saint Petersburg, 199178 Russia}
\email{ivanov.s.o.1986@gmail.com}

\author{Georgii Kadantsev}
\address{Laboratory of Continuous Mathematical Education (School 564 of St. Petersburg), nab. Obvodnogo kanala
143, Saint Petersburg, Russia}
\email{kadantsev.georg@yandex.ru}

\author{Kirill Kuznetsov}
\address{Laboratory of Continuous Mathematical Education (School 564 of St. Petersburg), nab. Obvodnogo kanala
143, Saint Petersburg, Russia}
\email{kirill\_kz2001@mail.ru}

\thanks{The project is supported by the grant of the Government of the Russian Federation
for the state support of scientific research carried out under the supervision of leading scientists,
agreement 14.W03.31.0030 dated 15.02.2018}

\begin{abstract} We prove that a subquandle of a free quandle is free.
\end{abstract}
\maketitle

\section*{\bf Introduction}

A quandle is a set with two binary operations   $(Q,\triangleleft,\triangleright)$, satisfying following identities.
\begin{enumerate}
\item 
$a\triangleleft (b\triangleleft c)=(a\triangleleft b) \triangleleft (a \triangleleft c) $

\item $(a\triangleright b)\triangleright c=(a\triangleright c)\triangleright (a\triangleright c)$
    
\item $a \triangleleft (b \triangleright a)=b=(a \triangleleft b) \triangleright a$
 
\item $a\triangleleft a=a=a\triangleright a$
\end{enumerate}
A group $G$ gives an example of a quandle if we define the operations as follows:
\[ h\triangleleft g =  g^{h^{-1}}, \hspace{1cm}
g \triangleright h = g^h,
\]
where $a^b=b^{-1}ab.$
This quandle is denoted by ${\sf Conj}(G).$
The definition is motivated by knot theory (see \cite{Joyce} for details). As for any type of algebraic structure, in quandle theory there is a notion of a free quandle.  V.~Bardakov, M.~Singh and M.~Singh in 
\cite[Problem 6.12]{Bardakov} raised the question about an analogue of Nielsen–Schreier theorem for quandles: is it true that any subquandle of a free quandle is free. This note is devoted to an affirmative answer on this question: 

\

\noindent {\bf Theorem}.
{\it A subquandle of a free quandle is free.}

\

Moreover, for a subquandle $Q$ of a free quandle, we give an explicit construction of a basis  $S(Q)$ in $Q.$ The main tool for us is the following description of free quandles.

\

\noindent {\bf Theorem}~{\cite[Th.4.1]{Joyce}}.
{\it Let $X$ be a set and let $F(X)$ be the free group generated by $X$. 
Denote by $FQ(X)$ the union of conjugation classes of elements of $X$:
\[FQ(X)= \bigcup_{x\in X} x^{F(X)}.\]
Consider $FQ(X)$ as a subquandle of ${\sf Conj}(F(X)).$ Then $FQ(X)$ is a free quandle generated by $X$.}

\section*{\bf Proof}

A subset of a group is called {\it independent} if it is a basis of a free subgroup.  

\begin{Lemma}\label{lemma_if_S_ind}
Let $S$ be a subset of $FQ(X)$ which is independent in $F(X).$
Then the subquandle of $FQ(X)$ generated by  $S$ is free.
\end{Lemma}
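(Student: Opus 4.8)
The plan is to identify the subquandle generated by $S$ explicitly and then recognize it, via Joyce's theorem, as a free quandle. Write $\langle S\rangle$ for the subgroup of $F(X)$ generated by $S$. Since the two quandle operations on ${\sf Conj}(F(X))$ are given by conjugation --- unwinding the definitions, $x\triangleright b=x^b$ and $b\triangleleft x=x^{b^{-1}}$ --- the first step is to show that the subquandle $R$ generated by $S$ is exactly
\[ R=\{\, s^w : s\in S,\ w\in\langle S\rangle\,\}. \]
The inclusion $R\subseteq\{s^w\}$ follows by checking that the right-hand set is closed under $\triangleleft$ and $\triangleright$ (each operation conjugates an element $s^w$ by another element of $\langle S\rangle$, so the base stays in $S$ and the exponent stays in $\langle S\rangle$) and contains $S$. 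For the reverse inclusion I would argue by induction on the length of a word $w$ in $S^{\pm1}$: starting from $s\in S$, the identities $x\triangleright b=x^b$ and $b\triangleleft x=x^{b^{-1}}$ show that if $x\in R$ and $b\in S$ then both $x^{b}$ and $x^{b^{-1}}$ lie in $R$, so every $s^w$ is reached.

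The second step uses the hypothesis that $S$ is independent. By definition this means $\langle S\rangle$ is a free group with basis $S$, so the inclusion $\langle S\rangle\hookrightarrow F(X)$ is an isomorphism from $F(S)$ onto $\langle S\rangle$ carrying each basis element to itself. Applying the functor ${\sf Conj}$ gives an injective quandle homomorphism $\iota\colon{\sf Conj}(F(S))\to{\sf Conj}(F(X))$, and since conjugation of an element of $\langle S\rangle$ by an element of $\langle S\rangle$ gives the same result whether computed inside $F(S)$ or inside $F(X)$, the image of $FQ(S)=\bigcup_{s\in S}s^{F(S)}$ under $\iota$ is precisely $R$. Thus $\iota$ restricts to a quandle isomorphism $FQ(S)\xrightarrow{\ \sim\ }R$ sending each $s$ to $s$. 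By Joyce's theorem $FQ(S)$ is free on $S$, so $R$ is free on $S$, which is the claim.

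I do not expect a deep obstacle here: the conceptual content is carried entirely by the functoriality of ${\sf Conj}$ together with Joyce's description of free quandles. The only point requiring genuine care is the first step --- pinning down the generated subquandle as $\{s^w\}$ --- and within it the verification that conjugation performed in the subgroup $\langle S\rangle$ agrees with conjugation performed in $F(X)$, which is exactly what makes $\iota$ a quandle map matching generators with generators. Everything else is the routine bookkeeping of tracking bases and exponents through the two operations.
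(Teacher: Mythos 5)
Your proof is correct and follows essentially the same route as the paper: use the injection $F(S)\hookrightarrow F(X)$ coming from independence, restrict it to an injective quandle morphism $FQ(S)\to FQ(X)$, and identify its image with the subquandle generated by $S$, which is then free by Joyce's theorem. Your explicit verification that this image equals $\{\,s^w : s\in S,\ w\in\langle S\rangle\,\}$ simply fills in a step the paper asserts without proof.
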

\begin{proof}
Consider the group homomorphism $F(S)\to F(X)$ induced by the embedding $S\hookrightarrow F(X).$ Since $S$ is independent, the homomorphism is injective. Then the restriction to the free quandles $FQ(S)\to FQ(X)$ is also injective. The subquandle generated by $S$ in $FQ(X)$ is equal to the image of the injective quandle morphism $FQ(S)\to FQ(X),$ and hence, it  is free. 
\end{proof}

Fix a set $X$ and denote $F:=F(X).$ We threat an element $w$ of $F$ as a reduced word and denote by $w_i$ its $i$th factor:
$$w=w_1w_2\dots w_n, \hspace{1cm} w_i\in X\cup X^{-1},\ \  w_i\ne w_{i-1}^{-1}.$$
The length of the word is denoted by $|w|:=n.$

Let $Q$ be a subquandle of $FQ(X)$. We are going to construct a subset $S(Q)$ in $Q$ and prove that it is a basis of $Q.$
 First, for an element $x\in X$ we consider the following subset:
\[ P_x:=\{ w\in F \mid x^w\in Q \wedge \  (w=1 \vee w_1\ne x^{\pm 1} )  \}. \]
It consists of all reduced words $w$ such that $x^w\in Q$ and whose first factor, if it exists, differs from $x$ and $x^{-1}.$  Then we define the set $T_x$ as follows
\[T_x = \left\{ w \in P_x \mid  \forall q\in Q \ \forall \varepsilon\in \{1,-1\} \ \ \  |wq^\varepsilon| > |w|  \right\}.\]
In some sense, $T_x$ consists of ``non shrinkable'' elements of $P_x.$ 
Finally, we consider the set 
\[ S(Q) :=  \bigcup_{x \in X}x^{T_x},\]
that consists of all elements of the form $x^w,$ where $w\in T_x.$ It is easy to see that $S(Q)\subseteq Q.$ Our aim is to prove that $Q$ is a free quandle generated by   $S(Q).$

\begin{Lemma}\label{lemma_SQ_gen}
The set $S(Q)$ generates the quandle $Q.$
\end{Lemma}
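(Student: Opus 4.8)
The plan is to prove that every element of $Q$ lies in the subquandle $\langle S(Q)\rangle$ generated by $S(Q)$ (the reverse inclusion being immediate from $S(Q)\subseteq Q$), by strong induction on the length of the conjugating word. The preliminary step is a direct computation of the two operations on conjugacy‑class representatives: for $x^w,y^v\in FQ(X)$ one has
\[ x^w\triangleright y^v=x^{\,w\,(y^v)},\qquad x^w\triangleleft y^v=y^{\,v\,(x^w)^{-1}}, \]
so each operation preserves one of the two base letters and right‑multiplies the corresponding conjugator by the group element underlying the other argument. From this I extract the two rewriting identities that drive the induction: for every $q\in Q$,
\[ x^w=x^{\,w q^{-1}}\triangleright q,\qquad x^w=q\triangleleft x^{\,w q}. \]
Thus $x^w$ is produced by a single quandle operation from $q$ together with $x^{\,wq^{\mp1}}$, and moreover $x^{\,wq^{\mp1}}$ again lies in $Q$ (being $q\triangleleft x^w$ resp.\ $x^w\triangleright q$).

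First I would record the normal form: since the centralizer of $x$ in $F$ is $\langle x\rangle$, every element of $Q$ is $x^w$ for a unique $x\in X$ and a unique reduced $w$ with $w=1$ or $w_1\ne x^{\pm1}$; write $\ell(x^w):=|w|$ for this canonical length, and induct on $\ell$. In the base case $w\in T_x$, and then $x^w\in S(Q)$ by definition; this already covers $\ell=0$, since $1\in T_x$ whenever $x\in Q$ (every $q\in Q$ has $|q|\ge 1$).

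For the inductive step suppose $w\notin T_x$, so there are $q=y^v\in Q$ (with $v$ canonical) and $\varepsilon\in\{1,-1\}$ with $|wq^{\varepsilon}|\le|w|$; put $w'=wq^{\varepsilon}$. Two length estimates make the induction close. Because $|q^\varepsilon|=2|v|+1$ is odd, $|wq^\varepsilon|$ and $|w|$ have opposite parity, so in fact $|w'|<|w|$; hence the canonical length of $x^{w'}=x^{w''}$ drops. Writing $|wq^\varepsilon|=|w|+|q|-2c$ for the amount $c$ of cancellation, the inequality forces $c\ge|v|+1$, and then $c\le|w|$ gives $|v|<|w|$, so $\ell(q)=|v|$ drops as well. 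By the induction hypothesis both $x^{w'}$ and $q$ lie in $\langle S(Q)\rangle$, and the appropriate identity above writes $x^w$ as a quandle product of the two; hence $x^w\in\langle S(Q)\rangle$.

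The main obstacle is exactly this simultaneous length control. It is immediate that shrinkability lets us shorten the conjugator $w$, but the shortening element $q$ is an arbitrary member of $Q$ and could a priori be arbitrarily long, in which case invoking the induction hypothesis on $q$ would be illegitimate. The crux is the cancellation bound $c>|q|/2$, which is precisely what the condition $|wq^\varepsilon|\le|w|$ encodes once one uses that $|q|$ is odd; this forces the conjugator $v$ of $q$ to be strictly shorter than $w$, so that $q$ is genuinely simpler and the induction on $\ell$ is well founded. Everything else is the routine conjugation bookkeeping summarized in the identities above.
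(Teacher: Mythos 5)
Your proof is correct and follows essentially the same strategy as the paper's: induction on the length of the conjugating word, the parity argument (elements of $FQ(X)$ have odd length) to upgrade $|wq^\varepsilon|\le|w|$ to a strict inequality, the cancellation count forcing $|v|<|w|$ so that the shortening element $q$ is itself covered by the induction hypothesis, and the rewriting identities expressing $x^w$ as a quandle operation applied to $x^{wq^{\varepsilon}}$ and $q$. The only difference is bookkeeping: you induct on the canonical length $\ell$ (justified via the centralizer fact), which is slightly tidier than the paper's induction over arbitrary representatives, since it makes the implication ``$w\notin T_x$ implies a shrinking $q$ exists'' automatic from $w\in P_x$.
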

\begin{proof} Set $S:=S(Q)$ and denote by $\langle S \rangle$ the subquandle of $Q$ generated by $S.$ We need to prove that $Q=\langle S \rangle.$ Take an element $q\in Q.$ By construction, there is an element $x\in X$ and $w\in P_x$ such that $q=x^w.$ So we need to prove that $x^w\in Q \Rightarrow x^w\in \langle S\rangle.$ In order to prove this by induction, we reformulate this in the following  way: for any $n\geq 0,$ if $|w|= n, x\in X$ and $x^w\in Q,$ then $ x^w \in  \langle S\rangle.$ Prove this by induction on $n.$ 

Prove the base case. Assume that $n=0.$ Then $w=1.$ Hence, $1\in P_x$ and $1\in T_x.$ So $x\in x^{T_x}\subseteq S.$ Therefore $x\in \langle S \rangle.$  

Prove the induction step. Assume that $|w|=n, x\in X$ and $x^w\in Q.$ If $w\in T_x,$ then the statement is obvious, because $x^w\in S.$ Then we can assume that $w\notin T_x.$ In this case there exists $q\in Q$ such that $|wq^\varepsilon|\leq |w|$ for some $\varepsilon\in \{1,-1 \}.$ Note that  elements from $FQ(X)$ have odd lengths, and hence,  $|wq^\varepsilon|$ and $|w|$ have different parity. Then they can't be equal.  Therefore $|wq^\varepsilon|<n.$ Then we set $w':=wq^\varepsilon.$ Then $x^{w'}=x^w\triangleright^{\varepsilon} q,$ where $\triangleright=\triangleright$ and $\triangleright^{-1}=\triangleleft.$  We obtain $x^{w'}\in Q.$ By induction hypothesis, we have that $x^{w'}\in \langle S \rangle.$ Since $q\in Q,$ there exists $y\in X$ and $u\in P_y$ such that $q=y^u.$ Note that the first letter of $u$ is not equal to $y,$ and hence, there are no cancelations in the product $u^{-1}y^\varepsilon u.$  Since the length of $w'=wu^{-1}y^\varepsilon u$ is less than the length of $w,$ we obtain that $u^{-1}y^\varepsilon$ completely cancels in the product $wu^{-1}y^\varepsilon.$  Therefore, $|u|<|w|=n.$ By the induction hypothesis we obtain $q=y^u\in \langle S\rangle.$ Combining this with the fact that $x^{w'}\in \langle S \rangle$  and the equation $x^w=x^{w'} \triangleright^{\varepsilon} q,$ we obtain $x^w\in \langle S \rangle.$
\end{proof}

\begin{Lemma}\label{lemma_SQ_indep}
The set $S(Q)$ is independent in $F(X).$
\end{Lemma}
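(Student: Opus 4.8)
The plan is to show that the inclusion $S(Q)\hookrightarrow F$ extends to an injective homomorphism from the abstract free group on $S(Q)$ into $F=F(X)$; equivalently, that every nonempty reduced word in the elements of $S(Q)$ and their inverses maps to a nontrivial element. I would do this by proving that such an image, after reduction in $F$, still visibly contains the ``cores'' of all the generators involved. Preliminarily I would record two facts about the shape of generators: since $T_x\subseteq P_x$, for $w\in T_x$ the first letter of $w$ differs from $x^{\pm1}$, so $x^w=w^{-1}xw$ is reduced of length $2|w|+1$; and every $q\in Q$ has a canonical expression $q=y^v$ with $v\in P_y$.

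The heart of the matter is to convert the metric, ``non-shrinkable'' condition defining $T_x$ into a combinatorial statement about suffixes. \emph{Key Lemma:} if $w\in T_x$ and $q=y^v\in Q$ with $v\in P_y$ is such that $v$ is a proper suffix of $w$, then the letter of $w$ immediately preceding this occurrence of $v$ is neither $y$ nor $y^{-1}$. Indeed, were that letter equal to $y^{-\delta}$ for some $\delta\in\{1,-1\}$, we could write $w=u\,y^{-\delta}\,v$ and compute $wq^{\delta}=u\,y^{-\delta}v\cdot v^{-1}y^{\delta}v=uv$, so that $|wq^{\delta}|\le|u|+|v|=|w|-1<|w|$, contradicting $w\in T_x$. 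This is the step I expect to be the main obstacle, both because it is the only place the definition of $T_x$ is genuinely used and because it must be phrased precisely so as to forbid, later, every dangerous cancellation; note that it uses $S(Q)\subseteq Q$, since the relevant $q$'s will themselves be generators.

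With the Key Lemma available, I would prove by induction on $k$ that for any reduced word $s_1^{\epsilon_1}\cdots s_k^{\epsilon_k}$ over $S(Q)\cup S(Q)^{-1}$ (reduced meaning no adjacent pair with $s_j=s_{j+1}$ and $\epsilon_j=-\epsilon_{j+1}$), writing $s_j=x_j^{u_j}$ with $u_j\in T_{x_j}$, its reduced form in $F$ equals $u_1^{-1}x_1^{\epsilon_1}m_1x_2^{\epsilon_2}\cdots m_{k-1}x_k^{\epsilon_k}u_k$ for suitable words $m_j$, and is reduced. In the induction step one appends $s_{k+1}^{\epsilon_{k+1}}$, and the only cancellation occurs in the junction $u_k\,u_{k+1}^{-1}$. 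If neither of $u_k,u_{k+1}$ is a suffix of the other, a nonempty piece of each leg survives and the two cores stay separated by a nonempty block. If one of them is a proper suffix of the other, the cancellation exposes exactly one core; applying the Key Lemma with the appropriate generator in the role of $q$ shows that the letter now adjacent to that core is not its inverse, so the core survives and, crucially, the cancellation stops there and never cascades leftward past $x_k^{\epsilon_k}$, while the full leg $u_{k+1}$ reappears on the right, restoring the invariant. In the degenerate case $u_k=u_{k+1}$ the junction cancels entirely and the two cores become adjacent, but they cannot cancel, precisely because $s_k^{\epsilon_k}s_{k+1}^{\epsilon_{k+1}}$ is not a cancelling pair.

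Since the reduced form thus contains the $k\ge 1$ core letters $x_1^{\epsilon_1},\dots,x_k^{\epsilon_k}$, it is nontrivial. Hence no nonempty reduced word over $S(Q)$ maps to $1$ in $F$, so $S(Q)$ freely generates the subgroup $\langle S(Q)\rangle$ of $F(X)$; that is, $S(Q)$ is independent. The two places demanding care will be the exact formulation and use of the Key Lemma, and the verification in the induction step that reduction at the new junction cannot propagate back through the previously placed cores.
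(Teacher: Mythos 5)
Your proposal is correct, and its mathematical core coincides exactly with the paper's: your Key Lemma is precisely the computation the paper performs. In the paper's notation, the two ways a central letter could cancel in $w^{-1}x^{\varepsilon}wv^{-1}y^{\delta}v$ are (1) $v^{-1}=w^{-1}x^{-\varepsilon}u$ and (2) $w=uy^{-\delta}v$ without cancellation, and each is refuted by producing a product $vq^{\pm 1}$ (resp.\ $wq^{\pm 1}$) shorter than $v$ (resp.\ $w$) for some $q\in Q$, contradicting membership in $T_y$ (resp.\ $T_x$); that is your Key Lemma, applied once with $s_k$ and once with $s_{k+1}$ in the role of $q$. Where you genuinely diverge is the surrounding scaffolding: the paper runs no induction over words in $S(Q)$ at all, but instead invokes M.~Hall's criterion of \emph{significant factors} \cite[Th. 7.2.2]{Hall}, declaring the central letter $x$ of each $x^w$ to be its significant factor; Hall's theorem then reduces independence to exactly the pairwise cancellation check above. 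Your route re-proves that criterion by hand in this special case, via the invariant that the reduced form of $s_1^{\epsilon_1}\cdots s_k^{\epsilon_k}$ retains all $k$ cores separated by (possibly empty) blocks, with the Key Lemma preventing any cascade past an exposed core. The trade-off is clear: the paper's proof is shorter because the word-combinatorial bookkeeping is delegated to the citation, while yours is self-contained and makes visible \emph{why} checking products of two generators suffices. Your sketch does handle the delicate points (the three junction cases, and stopping the cascade); the only detail worth making explicit is in the equal-legs case: from $u_k=u_{k+1}$ and $x_k^{\epsilon_k}x_{k+1}^{\epsilon_{k+1}}=1$ you should conclude $s_k=s_{k+1}$ and $\epsilon_{k+1}=-\epsilon_k$, which uses that the representation $s=x^u$ with $u\in P_x$ is unique (the word $u^{-1}xu$ is reduced, so its middle letter and legs are determined by $s$); only then is the pair literally a cancelling pair in the $S(Q)$-word, contradicting reducedness.
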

\begin{proof}
Following M. Hall \cite[\S 7.2]{Hall} we say that a subset $Y$ of $F(X)$ such that $Y\cap Y^{-1}=\emptyset$ {\it posses significant factors} if there is a collection of indexes $\{i(w)\}_{w\in Y\cup Y^{-1}}$ such that $0\leq i(w)\leq |w|,$   $i(w^{-1})=|w|+1-i(w)$
and in each product $wv$ for $w,v\in Y\cup Y^{-1}, w\ne v^{-1}$  the cancelation doesn't reach the factors $w_{i(w)}$ and $v_{i(v)}.$  If a subset possess significant factors, then it is independent  \cite[Th. 7.2.2.]{Hall}. So it suffices to show that $S(Q)$ possess significant factors. For each $x^w\in S(Q)$ we chose the central factor $x$ as a significant factor $i(x^w):=|w|+1.$ Then we only need to prove that the central factors $x^\varepsilon,y^\delta$  of the words $x^{\varepsilon w} $ and $y^{\delta v}$ do not cancel in the product $x^{\varepsilon w}y^{ \delta v}$ for $x,y\in X, w\in T_x,$
$ v\in T_y, \varepsilon,\delta\in \{1,-1\},$ 
and $x^{\varepsilon w}\ne y^{-\delta v}.$ 

Assume the contrary, that one of the factors $x^\varepsilon,y^\delta $ cancels in the product $w^{-1}x^\varepsilon wv^{-1} y^\delta v$. Then one of the following holds 
\begin{enumerate}
\item $v^{-1}$ can be presented as a product without cancelations  $v^{-1}=w^{-1}x^{-\varepsilon}u$ for some $u;$
\item  $w$ can be presented as a product without cancelations $w=uy^{-\delta}v$ for some $u.$
\end{enumerate} 
In the first case we have $vx^{-\varepsilon w}=u^{-1}w,$ and hence $|vx^{-\varepsilon w}|<|v|,$ which contradicts to the fact that $v\in T_y.$ In the second case we have $wy^{-\delta v}=uv,$ and hence $|wy^{-\delta v}|<|w|,$ which contradicts to the fact that $ w\in T_x.$
 \end{proof}

\begin{proof}[Proof of the theorem] Let $Q$ be a subquandle of $FQ(X).$ By Lemma \ref{lemma_SQ_gen} $Q$ is generated by the set $S(Q),$ which is independent by Lemma \ref{lemma_SQ_indep}. Then by Lemma \ref{lemma_if_S_ind}, we obtain that $Q$ is free.   
\end{proof}


\begin{thebibliography}{99}
 
\bibitem{Bardakov} V. G. Bardakov, M. Singh, M. Singh. Free quandles and knot quandles are residually free. Proc. Amer. Math. Soc. (2019)

\bibitem{Hall} M. Hall. The theory of groups. 

\bibitem{Joyce} D. Joyce. A classifying invariant of knots, the knot quandle. J. Pure Appl. Alg., 23, 37–65. (1982)





\end{thebibliography}
\end{document}